\theoremstyle{plain}
\newtheorem{cor}[subsection]{Corollary}
\newtheorem{lem}[subsection]{Lemma}
\newtheorem{prop}[subsection]{Proposition}
\newtheorem{thm}[subsection]{Theorem}
\newtheorem{assumption}[subsection]{Assumption}
\theoremstyle{definition}
\newtheorem{defn}[subsection]{Definition}
\theoremstyle{remark}
\newtheorem{rem}[subsection]{Remark}
\newcommand{\Ho}{{ \mathsf{Ho} }}
\newcommand{\Spectra}{{ \mathsf{Sp}^\Sigma }}
\newcommand{\Alg}{{ \mathsf{Alg} }}
\newcommand{\TQ}{{ \mathsf{TQ} }}
\newcommand{\AlgO}{{ \Alg_\capO }}
\newcommand{\capO}{{ \mathcal{O} }}
\newcommand{\id}{{ \mathrm{id} }}
\newcommand{\iso}{{ \cong }}
\DeclareMathOperator*{\hocolim}{hocolim}
\DeclareMathOperator{\BAR}{Bar}
\newcommand{\abs}[1]{\left\vert#1\right\vert}
\newcommand{\sett}[1]{\left\{#1\right\}}
\newcommand\weto{\stackrel{\sim}{\smash{\longrightarrow}\rule{0pt}{0.5ex}}}
\title[Chromatic localization and homotopy completion]{On the chromatic localization of the homotopy completion tower for $\capO$-algebras}
\author{Crichton Ogle}
\address{Department of Mathematics, The Ohio State University, 231 West 18th Ave, Columbus, OH 43210, USA}
\email{ogle.1@osu.edu}
\author{Nikolas Schonsheck}
\address{Department of Mathematical Sciences, University of Delaware, 501 Ewing Hall, Newark, DE 19716, USA}
\email{nischon@udel.edu}
\begin{document}

\maketitle

\begin{abstract}
The completion tower of a nonunital commutative ring is a classical construction in commutative algebra. In the setting of structured ring spectra as modeled by algebras over a spectral operad, the analogous construction is the homotopy completion tower. The purpose of this brief note is to show that smashing localizations, such as that given by the Johnson-Wilson spectrum $E(n)$, commute with the terms of this tower.
\end{abstract}

\section{Introduction}
We work in the context of symmetric spectra \cite{Hovey_Shipley_Smith}, and consider any algebraic structure described by algebras over a reduced operad $\capO$; that is, $\capO[0]=\ast$ and hence $\capO$-algebras are nonunital . (See \cite{EKMM} and \cite{Mandell_May_Schwede_Shipley} for other well-behaved categories of spectra.) To any $\capO$-algebra $X$, there is an associated homotopy completion tower \cite{Harper_Hess}, analogous to the $R$-adic completion tower of a nonunital commutative ring $R$. The aim of this short paper is to show that the terms of this tower commute with smashing localizations. 

To keep this paper appropriately concise, we freely use the notation of \cite{Harper_Hess} in discussing homotopy completion in $\AlgO$. The basic idea of the construction is that the map of operads $\capO \to \tau_k\capO$ induces a Quillen adjunction
\begin{align}\label{eq:barq_baru}
\xymatrix{
\AlgO \ar@<0.5ex>^-{\tau_k\capO\circ_\capO-}[r] & \Alg_{\tau_k\capO}\ar@<0.5ex>^-{F}[l]
}
\end{align}
where $\tau_k\capO$ is the degree $k$ truncation of $\capO$, i.e., $\tau_k\capO$ is equal to $\capO$ in degrees less than or equal to $k$ and is trivial above. For any $\capO$-algebra $X$, this construction gives rise to a tower $X\to \sett{\tau_k\capO\circ_\capO(X)}_k$. If $X$ is cofibrant, this construction is homotopical and is known as the homotopy completion tower of $X$. In fact, as we elaborate in Section \ref{sec_identification_taylor_tower}, this tower can be identified with the Taylor tower \cite{Goodwillie_calculus_3} of the identity functor on $\AlgO$. Our discussion of this identification supplements \cite[2.21]{Kuhn_Pereira} and \cite[4.3]{Pereira_spectral_operad}.

Throughout the paper, we let $E$ be a symmetric spectrum with the following property. For $\mathcal{U} \colon \mathsf{Sp}^\Sigma \to \mathsf{Sp}^\mathbb{N}$ the forgetful functor from symmetric to ordinary (or, sequential) spectra, assume that the localization functor associated to $\mathcal{U}E$, as defined in \cite{Bousfield_localization_spectra}, is smashing. We will denote this localization by $L_E$. Much of our technical work involves constructing a well-behaved model $S_E$ of the $E$-local sphere spectrum (see Theorem \ref{thm_main_lemma_2}). Using this model, we obtain our main result, proven in Section \ref{sec_convenient_model}. Note that, below, the superscript ``$\mathsf{h}$'' is added to denote a suitably derived version of $\tau_n\capO\circ_\capO(-)$, as in \cite[3.3]{Harper_Hess}; in particular, $\tau_n\capO\circ_\capO^\mathsf{h}(X)$ is fibrant for all $X$. We also remind the reader that equivalences in $\mathsf{Sp}^\mathbb{N}$ are strictly stronger than equivalences in $\mathsf{Sp}^\Sigma$, since the latter need not induce isomorphisms on stable homotopy groups.

\begin{thm}\label{thm_main_theorem}
Let $\capO$ be a $\Sigma$-cofibrant operad in $\Spectra$ and $\mathcal{U} \colon \mathsf{Sp}^\Sigma \to \mathsf{Sp}^\mathbb{N}$ the forgetful functor from symmetric to ordinary spectra. If $X$ is a fibrant $\capO$-algebra, then there is a weak equivalence
\begin{align}
L_E\big(\mathcal{U}\tau_n\capO\circ_\capO^\mathsf{h}(X)\big) \simeq \mathcal{U}\tau_n\capO\circ_\capO^\mathsf{h}(S_E \wedge X) 
\end{align}
with $\mathcal{U}(S_E \wedge X) \simeq L_E(\mathcal{U}X)$.
\end{thm}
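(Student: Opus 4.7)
My approach is to model the derived circle product as a two-sided simplicial bar construction and to commute smashing with $S_E$ past it level by level. Concretely, for fibrant $X$ and $\Sigma$-cofibrant $\capO$ we have the standard equivalence $\tau_n\capO\circ_\capO^\mathsf{h}(X)\simeq|\BAR_\bullet(\tau_n\capO,\capO,X)|$, whose $k$-th simplicial level $\tau_n\capO\circ\capO^{\circ k}\circ X$ is a wedge of symmetric spectra of the form $(\tau_n\capO\circ\capO^{\circ k})[j]\wedge_{\Sigma_j}X^{\wedge j}$. Theorem \ref{thm_main_lemma_2} furnishes a symmetric-spectrum model $S_E$ with the property that $\mathcal{U}(S_E\wedge Y)\simeq L_E\mathcal{U}Y$ for every symmetric spectrum $Y$; in particular this yields the secondary assertion $\mathcal{U}(S_E\wedge X)\simeq L_E(\mathcal{U}X)$ immediately.

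Applying that identity to $Y=|\BAR_\bullet|$ gives
$$L_E\bigl(\mathcal{U}\tau_n\capO\circ_\capO^\mathsf{h}(X)\bigr)\wequiv\mathcal{U}\bigl(S_E\wedge|\BAR_\bullet(\tau_n\capO,\capO,X)|\bigr).$$
Since smashing with $S_E$ commutes with geometric realizations and wedges, we may move it inside the realization and across wedges. On a typical wedge summand in level $k$, invoke the idempotence $S_E\wedge S_E\simeq S_E$ and the structured commutativity of the model $S_E$ to distribute
$$S_E\wedge X^{\wedge j}\wequiv(S_E\wedge X)^{\wedge j},$$
$\Sigma_j$-equivariantly and compatibly with the face and degeneracy maps of $\BAR_\bullet$. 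These level-wise equivalences assemble into an equivalence of simplicial symmetric spectra
$$S_E\wedge\BAR_\bullet(\tau_n\capO,\capO,X)\wequiv\BAR_\bullet(\tau_n\capO,\capO,S_E\wedge X),$$
whose realization is recognized as $\tau_n\capO\circ_\capO^\mathsf{h}(S_E\wedge X)$ once we note, again by Theorem \ref{thm_main_lemma_2}, that $S_E\wedge X$ carries a natural $\capO$-algebra structure. Applying $\mathcal{U}$ completes the chain of equivalences.

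The main technical obstacle is the simultaneous coherence demanded of the distribution $S_E\wedge X^{\wedge j}\simeq(S_E\wedge X)^{\wedge j}$: $\Sigma_j$-equivariance, compatibility with the simplicial structure of the bar construction, and descent to an honest $\capO$-algebra structure on $S_E\wedge X$. These are precisely the features that force the construction of a ``particularly nice'' model $S_E$ in Theorem \ref{thm_main_lemma_2}; once that model is in hand, the argument above is essentially a matter of moving smash past homotopy colimits and iterated smash products, both of which are standard for smashing localizations.
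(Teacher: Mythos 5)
There is a genuine gap, and it sits exactly at the point you treat as ``immediate.'' Theorem \ref{thm_main_lemma_2} is a statement about the \emph{derived} forgetful functor: it gives $\mathcal{RU}(S_E\wedge Y)\iso L_E(\mathcal{RU}Y)$ in the stable homotopy category. You quote it as ``$\mathcal{U}(S_E\wedge Y)\simeq L_E\mathcal{U}Y$ for every symmetric spectrum $Y$'' and then apply it to non-fibrant objects such as $S_E\wedge X$ and $S_E\wedge\abs{\BAR(\tau_n\capO,\capO,X)}$. For symmetric spectra this is not legitimate: weak equivalences need not induce isomorphisms of naive stable homotopy groups, so $\mathcal{U}$ applied to a non-fibrant, non-semistable object can have the wrong homotopy type, and $\mathcal{U}(S_E\wedge Y)\not\simeq\mathcal{RU}(S_E\wedge Y)$ in general. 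This is precisely why the paper's argument routes everything through $\mathcal{RU}$, uses the fibrancy of the tower terms $\tau_n\capO\circ_\capO^\mathsf{h}(X)$ to trade $\mathcal{RU}$ for $\mathcal{U}$, and, for the secondary assertion $\mathcal{U}(S_E\wedge X)\simeq L_E(\mathcal{U}X)$, invokes Schwede's result that $S_E\wedge X$ is \emph{semistable} (using that $S_E$ is cofibrant and fibrant and $X$ is fibrant) before identifying $\mathcal{U}(S_E\wedge X)$ with $\mathcal{RU}(S_E\wedge X)$. None of this bookkeeping appears in your argument, and without it the first displayed equivalence and the claim that the secondary assertion is ``immediate'' are unjustified.

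Concerning the rest: your bar-construction strategy is essentially the paper's, except that the paper isolates it as Theorem \ref{thm_main_lemma_1} (with $R=S_E$, using $S_E\wedge S_E\weto S_E$ from Theorem \ref{thm_main_lemma_2}) and then deduces Theorem \ref{thm_main_theorem} formally. Two points in your levelwise step need more care. First, the bar construction computes $\tau_n\capO\circ_\capO^\mathsf{h}(X)$ after cofibrantly replacing $X$ as an $\capO$-algebra; fibrancy of $X$ alone does not suffice, and the cofibrancy of the levels $\capO^{\circ k}\circ(X)$ is what the paper uses. Second, the comparison $S_E\wedge Y^{\wedge j}\simeq(S_E\wedge Y)^{\wedge j}$ is not a consequence of homotopy idempotence alone: the maps are built strictly from the commutative monoid structure of $S_E$ (so that they are $\Sigma_j$-equivariant and simplicial), and showing they are weak equivalences uses the $\Sigma$-cofibrancy of $\capO$ together with an argument in the injective stable model structure (the paper's Lemma \ref{lem_commute_r_outside_o}). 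You correctly flag these coherence issues, but deferring them to ``the construction of $S_E$'' leaves the actual equivalence unproved; the construction of $S_E$ supplies strict commutativity and $S_E\wedge S_E\weto S_E$, while the equivariant levelwise equivalence still requires the cofibrancy hypotheses above.
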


\begin{rem}
The particular case of Theorem \ref{thm_main_theorem} that originally motivated this paper is obtained when one takes $E$ to be the Johnson-Wilson spectrum $E(n)$ for a fixed prime $p$; see, for instance, \cite{Johnson_Wilson}. A construction of $E(n)$ in symmetric spectra is given in \cite[I.6.63]{Schwede_book_project} and it is a classical result \cite{Ravenel_nilpotence} that the corresponding $E(n)$-localization is smashing. It is also worth noting that the first layer $\tau_1\capO\circ^\mathsf{h}_\capO(X)$ of the homotopy completion tower of an $\capO$-algebra $X$ is the $\TQ$-homology of $X$ (see, e.g., \cite{Ching_Harper_derived_Koszul_duality}, \cite{Harper_bar_constructions}, \cite{Schonsheck_fibration_theorems}, and \cite{Schonsheck_taylor_tower_identity}). Thus, under appropriate fibrancy conditions, Theorem \ref{thm_main_theorem} shows that $\TQ$-homology commutes with chromatic (or, more generally, smashing) localizations.
 \end{rem}

\begin{rem}\label{rem_sigma_cofibrancy_of_O}
Although we have assumed $\Sigma$-cofibrancy in Theorem \ref{thm_main_theorem}, any operad can be replaced by a weakly equivalent $\Sigma$-cofibrant operad, and the induced map of homotopy completion towers is a weak equivalence; see, for instance, \cite[3.26, 5.48]{Harper_Hess}.
\end{rem}

In Section 4, we show an equivalence between the homotopy completion tower of an $\capO$-algebra $X$ and the Taylor tower of the identity functor evaluated on $X$, with no connectivity assumptions on $X$. Specifically, Theorem \ref{thm_identification_of_taylor_tower} establishes a zigzag of weak equivalences
\begin{equation}
\tau_n\capO\circ_\capO(X) \simeq P_n(\id)X
\end{equation}
for all cofibrant $\capO$-algebras $X$. This result, combined with Theorem \ref{thm_main_theorem}, implies the following.

\begin{thm}\label{thm_main_corollary} 
Under the assumptions of Theorem \ref{thm_main_theorem}, if $X$ is furthermore cofibrant, then there is a zigzag of weak equivalences
\begin{equation}
L_E(\mathcal{U}P_n(\id)X) \simeq \mathcal{U}P_n(\id)(S_E\wedge X)
\end{equation}
with $\mathcal{U}(S_E \wedge X) \simeq L_E(\mathcal{U}X)$.
\end{thm}

Our main result, Theorem \ref{thm_main_theorem}, follows without too much difficulty from Theorems \ref{thm_main_lemma_1} and \ref{thm_main_lemma_2}, which we turn to now.

\begin{thm}\label{thm_main_lemma_1}
Let $R$ be a commutative ring spectrum, i.e., a commutative monoid in $\Spectra$, for which the pairing map $R \wedge R \weto R$ is a weak equivalence and suppose that $R$ is cofibrant as a symmetric spectrum. If $\capO$ is a $\Sigma$-cofibrant operad in $\Spectra$, then for any $\capO$-algebra $X$, there is a zigzag of weak equivalences 
\begin{align}\label{eqn_main_thm}
R \wedge (\tau_n\capO\circ_\capO^\mathsf{h} X) \simeq \tau_n\capO\circ_\capO^\mathsf{h}(R\wedge X)
\end{align}
in the underlying category $\Spectra$.
\end{thm}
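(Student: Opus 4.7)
The plan is to model $\tau_n\capO \circ_\capO^{\mathsf{h}} X$ as the realization of a simplicial bar construction and commute $R \wedge -$ past it levelwise, using the idempotence hypothesis $R \wedge R \weto R$ to absorb the multiple copies of $R$ that appear when smashing into a symmetric sequence evaluated on $X^{\wedge r}$. Writing $N_k := \tau_n\capO \circ \capO^{\circ k}$ (a right $\capO$-module in $\SymSeq$), Harper-Hess realizes $\tau_n\capO \circ_\capO^{\mathsf{h}} X$ as $|\mathrm{Bar}(\tau_n\capO, \capO, X)|$ with $k$-th simplicial level $N_k \circ X = \bigvee_{r \geq 0} N_k[r] \wedge_{\Sigma_r} X^{\wedge r}$. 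Since $R$ is a commutative monoid in $\Spectra$, the spectrum $R \wedge X$ inherits an $\capO$-algebra structure by extension of scalars, so the bar construction $\mathrm{Bar}(\tau_n\capO, \capO, R \wedge X)$ is likewise defined.

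The key construction is a natural map of simplicial spectra
\begin{align*}
\Phi : \mathrm{Bar}(\tau_n\capO, \capO, R \wedge X) \longrightarrow R \wedge \mathrm{Bar}(\tau_n\capO, \capO, X),
\end{align*}
defined on the arity-$r$ summand $N_k[r] \wedge_{\Sigma_r} (R \wedge X)^{\wedge r}$ by composing the canonical shuffle $(R \wedge X)^{\wedge r} \cong R^{\wedge r} \wedge X^{\wedge r}$ with the $r$-fold multiplication $R^{\wedge r} \to R$, and identifying the target summand with $R \wedge N_k[r] \wedge_{\Sigma_r} X^{\wedge r}$. Compatibility of $\Phi$ with face and degeneracy maps is routine: only the innermost face map refers to the algebra structure, and its compatibility with $\Phi$ amounts to the \emph{definition} of the extended $\capO$-algebra structure on $R \wedge X$ (factor $R^{\wedge n} \to R$ before applying the action of $X$).

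To see $\Phi$ is a levelwise weak equivalence, note that $\capO[0] = \tau_n\capO[0] = \ast$ inductively forces $N_k[0] = \ast$, killing the arity-$0$ summand; for $r \geq 1$, iterating $R \wedge R \weto R$ yields $R^{\wedge r} \weto R$, and $\Sigma$-cofibrancy of $\capO$ ensures $N_k[r] \wedge_{\Sigma_r} (-)^{\wedge r}$ preserves this weak equivalence on each summand. Finally, $R \wedge -$ commutes with realization (being a left adjoint), and realization preserves levelwise weak equivalences of Reedy cofibrant simplicial spectra, so $|\Phi|$ yields the required weak equivalence between $\tau_n\capO \circ_\capO^{\mathsf{h}}(R \wedge X)$ and $R \wedge (\tau_n\capO \circ_\capO^{\mathsf{h}} X)$. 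The main obstacle is the cofibrancy bookkeeping—replacing $X$ and $R$ by appropriate cofibrant models so that the bar construction computes the derived circle product, smashing with $R$ preserves the relevant homotopy types, and the Reedy cofibrancy needed to realize the levelwise equivalence is in place—but this is standard under the assumed $\Sigma$-cofibrancy of $\capO$, and it is precisely these replacements that turn the single map $\Phi$ into the zigzag asserted in the statement.
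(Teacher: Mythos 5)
Your proposal is correct and follows essentially the same route as the paper: identify $\tau_n\capO\circ_\capO^{\mathsf{h}}$ with the realization of $\BAR(\tau_n\capO,\capO,-)$, build the levelwise map that shuffles $(R\wedge X)^{\wedge r}\iso R^{\wedge r}\wedge X^{\wedge r}$ and collapses $R^{\wedge r}\weto R$, use $\Sigma$-cofibrancy of the operad and cofibrancy of $X$ to see this is a levelwise weak equivalence, and then realize and commute $R\wedge-$ past realization. The only cosmetic difference is that the paper organizes the levelwise statement as a single lemma applied inductively to $Y=\capO^{\circ k}\circ(X)$, whereas you phrase it arity-by-arity on $N_k=\tau_n\capO\circ\capO^{\circ k}$; the content is the same.
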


The proof of Theorem \ref{thm_main_lemma_1} is the content of Section \ref{sec_proof_of_main_lemma_1}. In Section \ref{sec_convenient_model}, we construct $S_E$ and prove the following, along with Theorem \ref{thm_main_theorem}.

\begin{thm}\label{thm_main_lemma_2}
There exists a commutative ring spectrum $S_E$ with $S_E \wedge S_E \weto S_E$ a weak equivalence, and which is cofibrant as a symmetric spectrum, with the following property: for any $X \in \Spectra$, there is a natural isomorphism
\begin{align}\label{eqn_good_model_localization}
\mathcal{RU}(S_E\wedge X) \ \iso \ \mathcal{RU}(S_E)\wedge_{\mathcal{SHC}} \mathcal{RU}(X) \ \iso \ L_E(\mathcal{RU}X)
\end{align}
in the stable homotopy category, where $\mathcal{RU}$ denotes the right-derived functor of $\mathcal{U}$. 
\end{thm}

\begin{rem}
	In order to leverage certain compatibility results in \cite{Mandell_May_Schwede_Shipley}, notably relating the smash product in $\mathsf{Sp}^\Sigma$ and $\mathsf{Sp}^\mathbb{N}$, we have phrased Theorem \ref{thm_main_lemma_2}, its proof, and the proof of \ref{thm_main_theorem} in terms of ``the'' stable homotopy category. In order to be explicit, we take $\Ho(\mathsf{Sp}^\mathbb{N})$, the homotopy category of the Bousfield-Friedlander model category structure on sequential spectra $\mathsf{Sp}^\mathbb{N}$, as our model of the stable homotopy category. More concisely, we define $\mathcal{SHC} = \Ho(\mathsf{Sp}^\mathbb{N})$ and note that, in particular, this means the objects of $\mathsf{Sp}^\mathbb{N}$ and $\Ho(\mathsf{Sp}^\mathbb{N})$ are the same \cite[5.6]{Dwyer_Spalinski}. Consequently, if $X \in \mathsf{Sp}^\Sigma$ then $\mathcal{U}X$ lives, as an object, both in $\mathsf{Sp}^\mathbb{N}$ and $\mathcal{SHC}$. Similarly, since $\mathcal{RU}X$ is $\mathcal{U}$ applied to a fibrant replacement of $X$, the object $\mathcal{RU}X$ also lives both in $\mathsf{Sp}^\mathbb{N}$ and $\mathcal{SHC}$. For these same reasons, it is also appropriate to apply $L_E$ to any object in $\mathsf{Sp}^\mathbb{N}$. To distinguish between the smash product of $\mathsf{Sp}^\Sigma$ and that of $\mathcal{SHC}$, we denote the latter by $\wedge_{\mathcal{SHC}}$ and the former by simply $\wedge$.
\end{rem}

It is worth pointing out that, throughout this paper, we are considering the homotopy completion of $R\wedge X$ \emph{as an $\capO$-algebra}. It is known (see, e.g., \cite[2.1]{Kuhn_Pereira}) that the objectwise smash of an operad with a commutative ring spectrum is again an operad. Thus, one could also consider the homotopy completion of $R \wedge X$ as an algebra over the operad $R \wedge\capO$ which has values in $R$-modules (where the symmetric monoidal product is $\wedge_R$). In this case, the result analogous to Theorem \ref{thm_main_lemma_1} follows from \cite[4.10]{Harper_Hess} and \cite[2.11(d)]{Kuhn_Pereira}.

\begin{assumption}
Throughout this paper, $\capO$ will denote a reduced operad in $\Spectra$. We assume, for all $n \geq 0$, that $\capO[n]$ is $(-1)$-connected and that each map $I[n] \to \capO[n]$ of the unit morphism $I \to \capO$ is a flat stable cofibration between flat stable cofibrant objects in $\Spectra$. (Weaker than the $\Sigma$-cofibrancy condition needed for some of our results, this assumption guarantees that the forgetful functor $\AlgO \to \Spectra$ preserves cofibrant objects \cite[4.11]{Harper_Hess}.) Unless otherwise stated, we consider $\Spectra$ and $\AlgO$ with their positive flat stable model structures \cite[Section 7]{Harper_Hess}. 
\end{assumption}

\begin{rem}\label{rem_fibrant_o_algebras_are_semistable}
	Above, we have followed Schwede's terminology \cite{Schwede_book_project} for what Shipley refers to \cite{Shipley_commutative_ring_spectra} as the ``positive $S$-model structure.'' Relatedly, we make frequent use of the following fact to show that certain weak equivalences in $\AlgO$ forget to weak equivalences in $\mathsf{Sp}^\mathbb{N}$, i.e., induce isomorphisms on all (underived) stable homotopy groups. Suppose $X$ is an $\capO$-algebra that is fibrant in the positive flat stable model structure on $\AlgO$. Then the underlying symmetric spectrum $X$ is fibrant in the positive flat stable model structure on $\mathsf{Sp}^\Sigma$, and hence is also fibrant in the positive stable model structure on $\mathsf{Sp}^\Sigma$ (see \cite[Section 7]{Harper_Hess}). It follows from \cite[4.3]{Shipley_monoidal_uniqueness} that $X$ is a positive $\Omega$-spectrum, i.e., $X_n \to \Omega X_{n+1}$ is a weak equivalence for all $n > 0$ (see also \cite[14.2]{Mandell_May_Schwede_Shipley}). Hence, by \cite[4.2]{Schwede_homotopy_groups}, $X$ is semistable. Therefore, if $X \xrightarrow{f} Y$ is a weak equivalence of fibrant $\capO$-algebras in the positive flat stable model structure, then $f$ in fact induces isomorphisms on all (underived) stable homotopy groups, and so descends to a weak equivalence $\mathcal{U}X \xrightarrow{\mathcal{U}f} \mathcal{U}Y$ in $\mathsf{Sp}^\mathbb{N}$.
\end{rem}

\noindent
{\bf Acknowledgements.} The authors would like to thank John E. Harper, Duncan Clark, David White, and Yu Zhang for many useful conversations, and Mark Behrens and Doug Ravenel for several helpful correspondences. We are indebted to Nick Kuhn for alerting us to the comparison between the homotopy completion tower and the Taylor tower of the identity functor in the absence of connectivity assumptions, as well as an enlightening discussion of this point. The authors are also grateful to an anonymous referee who provided thorough and thoughtful feedback on an earlier version of this paper, which led to improved clarity of both our exposition and arguments. The second author was supported in part by Simons Foundation: Collaboration Grants for Mathematicians $\#$638247 and AFOSR grant FA9550-16-1-0212.\\

\section{Proof of Theorem \ref{thm_main_lemma_1}}\label{sec_proof_of_main_lemma_1}

The purpose of this section is to prove the first technical result of the paper, Theorem \ref{thm_main_lemma_1}. However, there are a few points that we need to address first. To begin, note that the statement of Theorem \ref{thm_main_lemma_1} implicitly uses the following fact. We suspect this is known to experts in the field (see, e.g., \cite[2.1]{Kuhn_Pereira}) but have included the statement and proof for the sake of completeness.

\begin{prop}
If $X$ is an $\capO$-algebra and $R$ is a commutative ring spectrum, then $R \wedge X$ inherits an $\capO$-algebra structure and the natural map $X \ \iso \ S\wedge X \to R \wedge X$ induced by the unit map of $R$ is a map of $\capO$-algebras.
\end{prop}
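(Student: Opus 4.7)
The proof strategy is to observe that a commutative ring spectrum $R$ makes the endofunctor $R \wedge -$ on $\Spectra$ into a lax symmetric monoidal functor, and lax symmetric monoidal functors carry $\capO$-algebras to $\capO$-algebras. Concretely, the multiplication $\mu \colon R \wedge R \to R$ and the symmetry of the smash product give a natural map
\begin{align*}
(R \wedge X) \wedge (R \wedge Y) \weto R \wedge R \wedge X \wedge Y \xrightarrow{\mu \wedge \id} R \wedge (X \wedge Y),
\end{align*}
and the unit $\eta \colon S \to R$ gives $S \to R \wedge S$. The commutativity of $R$ ensures that these monoidal structure maps are compatible with the symmetric monoidal structure, i.e.\ that $R \wedge -$ is \emph{symmetric} lax monoidal.

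The plan is then to write the structure maps on $R \wedge X$ explicitly as the composites
\begin{align*}
\capO[n] \wedge_{\Sigma_n} (R \wedge X)^{\wedge n} \xrightarrow{\cong} \capO[n] \wedge_{\Sigma_n} \bigl(R^{\wedge n} \wedge X^{\wedge n}\bigr) \xrightarrow{\id \wedge \mu^{(n)} \wedge \id} \capO[n] \wedge_{\Sigma_n} \bigl(R \wedge X^{\wedge n}\bigr) \to R \wedge X,
\end{align*}
where the first map shuffles the smash factors, $\mu^{(n)} \colon R^{\wedge n} \to R$ is the iterated multiplication, and the last map uses the $\capO$-algebra structure of $X$ on the $X^{\wedge n}$ factors (together with the canonical isomorphism pulling $R$ outside the coend). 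I would then verify, in order: (i) well-definedness on the $\Sigma_n$-coinvariants, which amounts to the $\Sigma_n$-equivariance of $\mu^{(n)}$ and reduces by induction to the commutativity $\mu \circ \tau = \mu$; (ii) the unit axiom for the $\capO[1]$-action, which is inherited from that of $X$; and (iii) compatibility with the operadic composition $\capO[n] \wedge \capO[k_1] \wedge \cdots \wedge \capO[k_n] \to \capO[k_1+\cdots+k_n]$, which follows from associativity of $\mu$ combined with associativity of the $\capO$-action on $X$.

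Finally, to see that $X \cong S \wedge X \xrightarrow{\eta \wedge \id} R \wedge X$ is a map of $\capO$-algebras, one notes that $\eta$ makes the unit $\id_{\Spectra} \to R \wedge -$ into a monoidal natural transformation of lax symmetric monoidal functors; diagrammatically, this amounts to the commutativity of $\mu \circ (\eta \wedge \id) = \id_R$, so inserting $\eta \wedge \id$ before each copy of $R$ in the composite above reproduces precisely the original $\capO$-action on $X$ smashed with $\eta$. I expect the main bookkeeping obstacle is simply tracking the shuffle permutations in step (i) and verifying that $\mu^{(n)}$ is $\Sigma_n$-equivariant; everything else is formal from the symmetric monoidal structure of $\Spectra$ and the commutativity of $R$.
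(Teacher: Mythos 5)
Your proposal is correct and follows essentially the same route as the paper: the explicit composite shuffling $(R\wedge X)^{\wedge n}$ into $R^{\wedge n}\wedge X^{\wedge n}$, applying the iterated multiplication, and then the $\capO$-action on $X$, with $\Sigma_n$-equivariance supplied by strict commutativity of $R$, is exactly the paper's construction, and the remaining axiom checks are handled there in the same (briefly indicated) way. Your lax symmetric monoidal framing is a pleasant conceptual wrapper but does not change the substance of the argument.
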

\begin{proof}
Let $\mu$ be the multiplication map of $R$ and $\capO\circ(X) \xrightarrow{\alpha} X$ the algebra structure map of $X$. The maps $\mu$ and $\alpha$ induce the following.
\begin{equation}\label{eqn_non_equivariant_commuting}
\begin{gathered}
\coprod_{n\geq 0} \capO[n]\wedge(R\wedge X)^{\wedge n} \ \iso \ \coprod_{n\geq 0} \capO[n] \wedge (R^{\wedge n} \wedge X^{\wedge n}) \xrightarrow{\mu_\ast} \\
\\
\xrightarrow{\mu_\ast} \coprod_{n \geq 0}\capO[n]\wedge(R\wedge X^{\wedge n}) \ \iso \ R\wedge \coprod_{n \geq 0} \capO[n] \wedge X^{\wedge n} \xrightarrow{\alpha_\ast} R\wedge X
\end{gathered}
\end{equation}
The fact that $R$ is (strictly) commutative implies that this composite is $\Sigma_n$-equivariant and hence induces a map
\begin{equation}
\capO\circ(R\wedge X) = \coprod_{n\geq 0} \capO[n]\wedge_{\Sigma_n}(R\wedge X)^{\wedge n} \to R \wedge X
\end{equation}
One then checks that this map is associative, unital, and compatible with the $\capO$-algebra structure on $X$.
\end{proof}

\begin{cor}\label{cor_commute_r_outside_o}
Given a commutative ring spectrum $R$, there is a natural map
\begin{align}
\capO\circ(R \wedge X) \to R \wedge \big( \capO\circ(X) \big)
\end{align}
\end{cor}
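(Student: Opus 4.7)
The plan is to read off the desired map from the construction in the preceding proposition by stopping before the application of the algebra structure map $\alpha$ of $X$. Explicitly, the composite displayed in \eqref{eqn_non_equivariant_commuting} already factors through
\[
R\wedge \coprod_{n\geq 0}\capO[n]\wedge X^{\wedge n}.
\]
Composing with the symmetry isomorphism $R\wedge Y\iso Y\wedge R$ moves the $R$-factor to the right, producing a map into $\big(\coprod_n\capO[n]\wedge X^{\wedge n}\big)\wedge R$.

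The next step is to verify $\Sigma_n$-equivariance on each summand, where $\Sigma_n$ acts diagonally on $(R\wedge X)^{\wedge n}$ on the source and trivially on the $R$-factor on the target. This is precisely the equivariance observation made in the preceding proposition: the $n$-fold iterated multiplication $R^{\wedge n}\to R$ is $\Sigma_n$-invariant exactly because $R$ is (strictly) commutative, and the factor $\capO[n]$ transforms in the same way on both sides. Passing to coinvariants termwise and summing over $n$ then produces the natural map
\[
\capO\circ(R\wedge X) = \coprod_{n\geq 0}\capO[n]\wedge_{\Sigma_n}(R\wedge X)^{\wedge n} \to \Big(\coprod_{n\geq 0}\capO[n]\wedge_{\Sigma_n}X^{\wedge n}\Big)\wedge R = \big(\capO\circ(X)\big)\wedge R.
\]

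Naturality in $X$ is then immediate from functoriality of each ingredient, namely smash powers, operadic composition, and the multiplication of $R$. Since the corollary is essentially a byproduct of the construction carried out in the preceding proposition, I do not foresee a genuine obstacle; the only point worth isolating is the use of strict commutativity of $R$ in the $\Sigma_n$-equivariance step, and that has already been dispatched in the proof above.
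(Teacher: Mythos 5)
Your proposal is correct and matches the paper's proof, which likewise takes the composite of the first three ($\Sigma_n$-equivariant) maps of \eqref{eqn_non_equivariant_commuting}, stopping before the algebra structure map $\alpha_\ast$, and passes to $\Sigma_n$-coinvariants; your only addition is making the symmetry isomorphism moving $R$ to the right explicit, which the paper leaves implicit.
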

\begin{proof}
The composite of the first three ($\Sigma_n$-equivariant) maps of \eqref{eqn_non_equivariant_commuting} induces the desired map.
\end{proof}

Though discovered independently, a result similar to the following appears in unpublished work by John E. Harper in collaboration with the first author and Yu Zhang.

\begin{lem}\label{lem_commute_r_outside_o}
Suppose $\capO$ and $R$ are as in Theorem \ref{thm_main_lemma_1}. Let $Y$ be a cofibrant $\capO$-algebra and $R$ a commutative ring spectrum for which the pairing map $R \wedge R \weto R$ is a weak equivalence and which is cofibrant in $\Spectra$. Then the map in Corollary \ref{cor_commute_r_outside_o} is a weak equivalence.
\end{lem}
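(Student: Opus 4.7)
My strategy is to check the claim arity-by-arity. The map $\capO \circ (R \wedge Y) \to (\capO \circ Y) \wedge R$ of Corollary \ref{cor_commute_r_outside_o} splits as a coproduct, indexed by $n \geq 0$, of maps
\begin{equation*}
\capO[n] \wedge_{\Sigma_n} (R^{\wedge n} \wedge Y^{\wedge n}) \ \longrightarrow \ R \wedge \bigl(\capO[n] \wedge_{\Sigma_n} Y^{\wedge n}\bigr)
\end{equation*}
induced by the iterated multiplication $\mu_n \colon R^{\wedge n} \to R$; this is $\Sigma_n$-equivariant by commutativity of $R$ (with the trivial $\Sigma_n$-action on the target copy of $R$). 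Since weak equivalences in $\Spectra$ are closed under coproducts, it suffices to prove each such map is a weak equivalence.

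\textbf{First, the iterated multiplication is a weak equivalence.} I would verify that $\mu_n$ itself is a weak equivalence of underlying spectra. After replacing $R$ by a cofibrant model in the category of commutative ring spectra, which preserves the hypothesis $R \wedge R \weto R$, the underlying spectrum of $R$ is flat in the positive flat stable model structure, so $R \wedge -$ preserves weak equivalences between arbitrary spectra. A straightforward induction on $n$, writing $R^{\wedge n} \iso R \wedge R^{\wedge (n-1)}$ and appealing to the base case $R \wedge R \weto R$, then gives $\mu_n \colon R^{\wedge n} \weto R$; the $\Sigma_n$-equivariance is automatic.

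\textbf{Next, propagate through the orbits.} I would then apply $\capO[n] \wedge_{\Sigma_n} (- \wedge Y^{\wedge n})$ to this $\Sigma_n$-equivariant weak equivalence. Two facts are needed. First, smashing with $Y^{\wedge n}$ preserves underlying weak equivalences: this holds because $Y$ cofibrant in $\AlgO$ is flat as a spectrum (a standard consequence in \cite{Harper_Hess} of $\Sigma$-cofibrancy of $\capO$ together with the positive flat stable model structure), and flat spectra are closed under smash products. Second, the $\Sigma$-cofibrancy of $\capO$ makes $\capO[n]$ itself $\Sigma_n$-cofibrant, so that $\capO[n] \wedge_{\Sigma_n} -$ is homotopically well-behaved on the relevant $\Sigma_n$-spectra and carries the equivariant weak equivalence above to a weak equivalence. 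Combining the two facts yields the desired conclusion on each arity.

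\textbf{Main obstacle.} I expect the principal difficulty to lie in the second step: $\Sigma_n$-orbits are not a homotopy-invariant construction in general, and one needs the cofibrancy assumptions on $\capO$ (together with flatness of $Y^{\wedge n}$) to ensure $\capO[n] \wedge_{\Sigma_n} -$ behaves correctly on the $\Sigma_n$-spectra that arise. The cleanest way to dispatch this is to cite the relevant flatness and pushout-filtration results in \cite{Harper_Hess}. If direct appeal to those results proves awkward, a backup plan is to filter $Y$ by its cellular structure in $\AlgO$ and induct on the cells, using the same pushout filtrations.
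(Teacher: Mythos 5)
Your outline follows the same route as the paper's proof: decompose the map of Corollary \ref{cor_commute_r_outside_o} arity by arity, show the iterated multiplication $R^{\wedge n}\to R$ is a weak equivalence, smash with $Y^{\wedge n}$ (cofibrant, hence flat, since the forgetful functor preserves cofibrancy under the $\Sigma$-cofibrancy hypothesis), and then push the equivalence through $\capO[n]\wedge_{\Sigma_n}-$. However, two of your steps have genuine problems as written. First, the cofibrant replacement of $R$ does not do what you want: the lemma is about the point-set smash powers of the \emph{given} $R$, which is not assumed flat, so neither the hypothesis (comparing $R^c\wedge R^c$ with $R\wedge R$ requires smashing the weak equivalence $R^c\weto R$ with the possibly non-flat $R$) nor the conclusion (one would need $(R^c)^{\wedge n}\to R^{\wedge n}$ to be a weak equivalence) transfers across $R^c\weto R$ without exactly the flatness the replacement was introduced to supply; the step is circular. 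The paper avoids this by reading the hypothesis as directly giving $R^{\wedge n}\weto R$ for all $n$ (and in the intended application $R=S_E$ is positive flat cofibrant, so your induction goes through with no replacement at all).

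Second, the step you correctly identify as the main obstacle is left unresolved, and your backup plan is aimed at the wrong object: the inputs $R^{\wedge n}\wedge Y^{\wedge n}\to R\wedge Y^{\wedge n}$ fail to be positive flat cofibrant because of the factor $R$, not because of $Y$, so filtering $Y$ by cells in $\AlgO$ cannot repair it. The paper's resolution is a specific and short one: $\Sigma$-cofibrancy of $\capO$ makes $\capO[n]$ projectively cofibrant, and by passing to the injective stable model structure on $\Spectra$, in which \emph{every} object is cofibrant, the functor $\capO[n]\wedge_{\Sigma_n}-$ preserves weak equivalences between arbitrary (in particular non-cofibrant) $\Sigma_n$-objects; the same observation disposes of the coproduct over $n$. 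With the replacement step deleted and this injective-model-structure argument (or an equivalent citation) supplied for the orbit step, your proposal coincides with the paper's proof.
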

\begin{proof}
It follows from our assumptions and \cite[7.12]{Harper_Hess} that the multiplication map $R^{\wedge n} \weto R$ is a weak equivalence for any $n \geq 0$. Similarly, since $Y$ was assumed to be cofibrant, we know that $Y^{\wedge n}$ is as well and so the map $R^{\wedge n} \wedge Y^{\wedge n} \weto R\wedge Y^{\wedge n}$ is a weak equivalence. The assumption that $\capO$ is $\Sigma$-cofibrant implies that $\capO[n]$ is projectively cofibrant and so the functor $\capO[n] \wedge_{\Sigma_n}(-)$ preserves weak equivalences. By considering the injective stable model structure (see \cite[5.3]{Hovey_Shipley_Smith} and \cite[III.4.13]{Schwede_book_project}) on $\Spectra$, in which every object is cofibrant, it follows that the map
\begin{align}
\coprod_{n \geq 0} \capO[n] \wedge_{\Sigma_n}(R^{\wedge n} \wedge Y^{\wedge n}) \xrightarrow{\mu_\ast} \coprod_{n \geq 0}\capO[n]\wedge_{\Sigma n}(R \wedge Y^{\wedge n})
\end{align}
is a weak equivalence.
\end{proof}
\begin{rem}\label{rem_also_works_with_tn_o}
Above, the $\Sigma$-cofibrancy assumption on $\capO$ was used only to conclude that the functor $\capO[k] \wedge_{\Sigma_k}(-)$ preserves weak equivalences for all $k \geq 0$. For a fixed $n$, since $\tau_n\capO[k]$ is either $\capO[k]$ or trivial, it follows that $\tau_n\capO[k] \wedge_{\Sigma_k}(-)$ also preserves weak equivalences for all $k \geq 0$. The previous proof therefore remains valid if $\capO$ is replaced by $\tau_n\capO$.
\end{rem}

We can now give the proof of Theorem \ref{thm_main_lemma_1}.

\begin{proof}[Proof of Theorem \ref{thm_main_lemma_1}]
By replacing if necessary, it suffices to consider the case of a cofibrant $X \in \AlgO$. We construct a weak equivalence
\begin{align}\label{eqn_main_calcn}
\abs{\BAR(\tau_n\capO,\capO, R \wedge X)} \simeq R\wedge\abs{\BAR(\tau_n\capO,\capO,X)}
\end{align}
The proof is completed by appealing to \cite[4.10]{Harper_Hess}.

Our cofibrancy assumption on $\capO$ ensures that the forgetful functor $\AlgO \to \Spectra$ preserves cofibrant objects (see, e.g., \cite[4.11]{Harper_Hess}). Hence, the fact that $X$ is cofibrant implies that $\capO^{\circ k}\circ(X)$ is cofibrant for any $k \geq 0$ by  \cite[1.2]{Harper_bar_constructions}. Inductive application of Lemma \ref{lem_commute_r_outside_o}, with $Y = \capO^{\circ k} \circ (X)$, then shows that there is a levelwise weak equivalence
\begin{align}
\BAR(\tau_n\capO,\capO,R\wedge X) \weto R\wedge\BAR(\tau_n\capO,\capO,X)
\end{align}
Applying geometric realization and commuting with $R \wedge - $ completes the proof.
\end{proof}

\section{Constructing $S_E$ and the proof of Theorem \ref{thm_main_theorem}}\label{sec_convenient_model}

The purpose of this section is to construct a convenient model for the $E$-local sphere spectrum and prove the main result of the paper. In particular, we use \cite{Shipley_commutative_ring_spectra} and \cite{White_monoidal_bousfield_localizations} to find a symmetric spectrum $S_E$ that satisfies the desirable properties listed in Propositions \ref{prop_first_properties_of_s_e}. With this model in hand, we then prove Theorem \ref{thm_main_lemma_2} and conclude with the proof of Theorem \ref{thm_main_theorem}.

\begin{rem}\label{rem_subtlety}
There is a subtlety in this section that we wish to highlight. In his original work \cite{Bousfield_localization_spectra}, Bousfield did not have the luxury of a highly structured category of spectra in which to construct his localization functor $L_E$. His construction (see also \cite[II.9]{Schwede_book_project}) is therefore slightly different than the localization in the sense of \cite{Hirschhorn}, which we implicitly use in constructing $S_E$. We denote by $L$ the localization functor of \cite{Hirschhorn} in $\Spectra$ at the $E$-equivalences, i.e., $L$ is the composite of a fibrant replacement in the localized model structure composed with the identity ``back to'' the non-localized model structure. Using the comparisons established in \cite{Mandell_May_Schwede_Shipley}, one can show that $\mathcal{U}L(S)$ is a model for $L_E(\mathcal{U}S)$, and hence the two are canonically weakly equivalent in $\mathsf{Sp}^\mathbb{N}$.
\end{rem}

To construct $S_E$, first let $S^c \weto S$ be a functorial cofibrant replacement of the symmetric sphere spectrum $S$ in the model structure on commutative symmetric ring spectra established by \cite[3.2]{Shipley_commutative_ring_spectra}. The following two lemmas are needed to apply the relevant result of \cite{White_monoidal_bousfield_localizations} ultimately used to construct $S_E$.

\begin{lem}\label{lem_monoidal_localization}
$L$ is a monoidal Bousfield localization in the sense of \cite[4.4]{White_monoidal_bousfield_localizations}
\end{lem}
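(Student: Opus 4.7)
The plan is to verify the conditions defining a monoidal Bousfield localization given in \cite[4.4]{White_monoidal_bousfield_localizations}. Since the positive flat stable model structure on $\Spectra$ is already a monoidal model category by \cite[3.2]{Shipley_commutative_ring_spectra}, the task reduces to checking that the Bousfield--Hirschhorn localization at the $E(n)$-equivalences inherits a monoidal structure; concretely, one must verify the pushout product axiom and the unit axiom for the localized model category.

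The essential input is that $L_E$ is smashing by \cite{Ravenel_nilpotence}, so there is a natural weak equivalence $L_E X \simeq L_E(S) \wedge X$. From this one deduces that $E$-local equivalences are closed under the smash product with arbitrary cofibrant objects: if $f$ is an $E_*$-equivalence, then $L_E(f \wedge Z) \simeq L_E(f) \wedge Z$ is a weak equivalence, so $f \wedge Z$ is again an $E$-local equivalence. This closure is essentially the only extra ingredient needed beyond the monoidality of the ambient model structure.

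With this closure property in hand, the pushout product axiom for the localization follows from the one for the underlying structure. Given a cofibration $i\colon A \to B$ and an $L$-acyclic cofibration $j\colon X \to Y$, the pushout product $i \square j$ is already a cofibration by the underlying monoidal axiom, and that it is in addition an $E$-local equivalence is obtained by combining the closure property with the two-out-of-three property applied to the pushout square defining $i \square j$. The unit axiom carries over from the underlying model structure, since the unit object $S$ is unchanged by the localization.

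The main obstacle here is bookkeeping rather than conceptual depth: one must carefully match White's axioms against what smashing immediately provides, and the pushout product argument must be organized so that cofibrancy of the relevant smash factors is maintained throughout. As a streamlined alternative, one can appeal to a convenient sufficient criterion in \cite{White_monoidal_bousfield_localizations} that reduces the verification to a check on the generating acyclic cofibrations of the localized structure, which follow directly from smashing together with the structure of the generating acyclic cofibrations produced by the Hirschhorn machinery.
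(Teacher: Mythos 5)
Your overall strategy is sound and, in its ``streamlined alternative'' form, is essentially what the paper does: the paper does not verify the pushout product and unit axioms by hand, but instead invokes \cite[4.6]{White_monoidal_bousfield_localizations}, which (given that the positive flat stable structure is cofibrantly generated, monoidal, and has flat cofibrant objects, by \cite[3.1]{Shipley_commutative_ring_spectra} and \cite[7.12]{Harper_Hess}) reduces the whole lemma to exactly the closure property you identify, namely that $E(n)$-equivalences remain $E(n)$-equivalences after smashing with a (positive flat) cofibrant object $K$. The genuine difference is how that closure property is obtained. You derive it from Ravenel's smashing theorem, writing $L_E(f\wedge Z)\simeq L_E(f)\wedge Z$; this works, but it is a much heavier input than needed, and it also requires the comparison (the paper's Remark \ref{rem_subtlety}) between Bousfield's $L_E$ on ordinary spectra and the model-categorical localization $L$ of symmetric spectra before the smashing theorem can be applied to $L$-local equivalences. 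The paper's argument is more elementary: an $E(n)$-equivalence is by definition a map $f$ with $E(n)\wedge^{L} f$ a weak equivalence, so after cofibrantly replacing the (co)domain one smashes with the cofibrant object $K$ and uses flatness of cofibrant objects to conclude that $E(n)\wedge^{L}(f\wedge K)$ is still a weak equivalence; no smashing theorem is needed for this lemma (it enters only later, in the proof of Theorem \ref{thm_main_lemma_2}). Finally, your hand verification of the pushout product axiom via two-out-of-three is only a sketch: to run it one must reduce to generating (acyclic) cofibrations or otherwise arrange cofibrant smash factors, and one must know that cofibrations which are $L$-local equivalences are closed under pushout in the localized structure; these are precisely the bookkeeping points that citing White's criterion lets you avoid.
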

\begin{proof}
We appeal to \cite[4.6]{White_monoidal_bousfield_localizations}, noting that, when endowed with the positive flat stable model structure, the category $\Spectra$ is a cofibrantly generated monoidal model category in which cofibrant objects are flat; see \cite[3.1]{Shipley_commutative_ring_spectra} and \cite[7.12]{Harper_Hess}, respectively. 

Let $f \colon A \to B$ be an $E$-equivalence and $K$ a cofibrant symmetric spectrum. Cofibrantly replacing $A$ and $B$, we have that the map
\begin{align}
E \wedge A^c \weto E \wedge B^c
\end{align}
is a weak equivalence. Smashing with the cofibrant spectrum $K$ preserves this weak equivalence and it follows that the map
\begin{align}
E \wedge^L(A\wedge K) \weto E \wedge^L(B\wedge K)
\end{align}
is a weak equivalence, i.e., that $A\wedge K \to B \wedge K$ is an $E$-equivalence.
\end{proof}

\begin{lem}\label{lem_rectification_axiom}
The category of symmetric spectra with the positive flat stable model structure satisfies the rectification axiom of \cite[4.5]{White_model_structures_on_commutative_monoids}.
\end{lem}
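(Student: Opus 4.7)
The rectification axiom of \cite[4.5]{White_model_structures_on_commutative_monoids} asks that for every cofibrant $X \in \Spectra$ and every $n \geq 0$, the canonical comparison map
\begin{align*}
(E\Sigma_n)_+ \wedge_{\Sigma_n} X^{\wedge n} \longrightarrow X^{\wedge n}/\Sigma_n
\end{align*}
from homotopy $\Sigma_n$-orbits to strict $\Sigma_n$-orbits is a weak equivalence (together with the analogous statement for acyclic cofibrations of cofibrant objects, which is handled by the same template). The plan is to reduce this to the freeness of the $\Sigma_n$-action on smash powers of positive flat cofibrant symmetric spectra, and then invoke the existing literature.

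First, I would recall from \cite[3.1]{Shipley_commutative_ring_spectra} that in the positive flat stable structure the generating (acyclic) cofibrations are built from free symmetric spectra $G_m$ with $m \geq 1$, so any positive flat cofibrant object $X$ is a retract of a transfinite cell complex whose cells are of this form. The restriction $m \geq 1$ is the key point: it forces $\Sigma_n$ to act freely away from the basepoint on $X^{\wedge n}$ in every spectrum level, which is precisely what equates strict orbits with homotopy orbits.

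Next, I would appeal to the symmetric-power analysis in \cite{Shipley_commutative_ring_spectra} (with parallel discussion in \cite{Mandell_May_Schwede_Shipley}) to conclude that for positive flat cofibrant $X$ the projection in question is a weak equivalence. Concretely, one argues by induction on the cellular filtration: on a single positive flat cell the comparison is an explicit level-wise calculation using the freeness of $\Sigma_n$-orbits on $(G_m K)^{\wedge n}$ when $m \geq 1$; the inductive step is a pushout along a generating cofibration, which is preserved by both $(E\Sigma_n)_+\wedge_{\Sigma_n}(-)^{\wedge n}$ and $(-)^{\wedge n}/\Sigma_n$ up to weak equivalence because flat cofibrant objects are flat and so smashing with them preserves weak equivalences; passage through filtered colimits and retracts is routine. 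The acyclic-cofibration half of the axiom is verified identically.

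The main obstacle is alignment of definitions rather than genuine technical difficulty: one must verify that the precise formulation of \cite[4.5]{White_model_structures_on_commutative_monoids} matches the symmetric-power comparison as stated in the symmetric-spectra literature, and that positive \emph{flatness} (as opposed to the positive projective structure more commonly treated) is strong enough for the cellular induction to go through. Once the definitions are matched the conclusion is essentially classical and is, in fact, part of what underwrites the existence of the model structure on commutative symmetric ring spectra used in \cite{Shipley_commutative_ring_spectra} in the first place.
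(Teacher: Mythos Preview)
Your argument is essentially correct: the heart of the rectification axiom is the comparison between homotopy and strict $\Sigma_n$-orbits of $X^{\wedge n}$ for positive flat cofibrant $X$, and the mechanism you identify---that positivity forces $\Sigma_n$ to act freely on smash powers, so a cellular induction over the generating cofibrations of \cite[3.1]{Shipley_commutative_ring_spectra} goes through---is exactly the right one. Your parenthetical about acyclic cofibrations is not part of White's axiom as stated, but it does no harm.

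The paper, however, takes a much shorter route: it observes that symmetric spectra sit inside symmetric sequences as those concentrated at $0$, and then cites \cite[4.29*(b)]{Harper_symmetric_spectra_corrigendum} directly. That result already packages the cellular induction you sketch (indeed in greater generality), so the lemma reduces to a one-line specialization. What your approach buys is transparency---you make explicit the positivity-implies-freeness point that drives everything---whereas the paper's approach buys concision and avoids re-deriving a result already in the literature. Both land in the same place; the difference is whether one unpacks Harper's argument or simply invokes it.
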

\begin{proof}
This follows from \cite[3.3]{Shipley_commutative_ring_spectra} by taking ``$Y$'' to be the sphere spectrum. Alternatively, by considering symmetric sequences concentrated at $0$, the result follows from \cite[4.29*(b)]{Harper_symmetric_spectra_corrigendum}. 
\end{proof}

Lemmas \ref{lem_monoidal_localization} and \ref{lem_rectification_axiom} now show that the conditions of \cite[7.2]{White_monoidal_bousfield_localizations} are satisfied; hence, there is a commutative diagram
\begin{align}
\xymatrix{
S^c \ar ^-{(*)}[r] \ar[d] & \widetilde{S^c}\\
L(S^c) \ar _-{\sim}[ur]
}
\end{align}
in $\Spectra$, where $(*)$ is a map of commutative monoids. (Note that the existence of such a diagram is what it means for $L$ to preserve commutative monoids in the language of \cite{White_monoidal_bousfield_localizations}; see, for instance, the proof of \cite[3.2]{White_monoidal_bousfield_localizations}.) Replacing if necessary, we may also assume that $\widetilde{S^c}$ is fibrant.

\begin{defn}[Construction of $S_E$]
We define $S_E$ by factoring the map $(*)$ 
\begin{align}
\xymatrix{
S^c \ar[r] & S_E \ar ^-{\sim}[r] & \widetilde{S^c}
}
\end{align}
as a cofibration followed by an acyclic fibration in the model structure of \cite[3.2]{Shipley_commutative_ring_spectra}, namely the positive flat stable model structure on commutative ring spectra.

\end{defn}

The following Proposition details the advantageous properties of this construction.

\begin{prop}\label{prop_first_properties_of_s_e}
As defined, $S_E$ is
\begin{enumerate}[label=(\roman*)]
\item a strictly commutative monoid in $\Spectra$;
\item positive flat stable cofibrant and fibrant;
\item weakly equivalent to the $E$-local sphere $L(S)$; in fact, this equivalence is compatible with the localization map $S \to L(S)$ and the unit map $S \to S_E$ and furthermore induces an isomorphism on stable homotopy groups.
\end{enumerate}
\end{prop}
\begin{proof}
That $S_E$ is a strictly commutative monoid and fibrant is immediate from its construction. It follows from \cite[4.1]{Shipley_commutative_ring_spectra} that $S^c$ is positive flat stable cofibrant and so $S_E$ is as well. It is worth noting that, by \cite[7.12]{Harper_Hess}, the functor $S_E\wedge-$ preserves weak equivalences. 

To see that $S_E$ is naturally weakly equivalent to $L(S)$, consider the following commutative diagram
\begin{align}\label{diagram_natural_we_to_local_S}
\xymatrix{
S\ar@{=}[r] \ar@{=}[d]&
S\ar@{=}[r] \ar[d]&
S\ar@{=}[r] \ar[d]&
S \ar[d]\\
S\ar[d]&
S^c \ar _-{\sim}[l] \ar[r] \ar[d]&
S_E \ar ^-{\sim}[r] &
\widetilde{S^c}\\
L(S)&
L(S^c) \ar _-{\sim}[urr] \ar _-{(\#)}[l]
}
\end{align}
in $\Spectra$, where the upper vertical maps are the unit maps associated to each commutative monoid comprising the second row. The fact that $S^c \weto S$ is a weak equivalence implies that $(\#)$ is a weak equivalence as well, and this gives the zigzag of weak equivalences $S_E \simeq L(S)$. Furthermore, note that in the (bottom) zigzag of weak equivalences
\begin{align}\label{eqn_compatibility_of_we}
\xymatrix@C=1em{
&&&S\ar[dlll] \ar[dl] \ar[dr] \ar[drrr]\\
S_E \ar[rr]^-{\sim} &&  \widetilde{S^c} && L(S^c) \ar[ll]_-{\sim} \ar[rr]^-{\sim}&& L(S)
}
\end{align}
each object is fibrant, and so the weak equivalences in fact induce isomorphisms on stable homotopy groups (see Remark \ref{rem_fibrant_o_algebras_are_semistable}). Lastly, note that the leftmost downward arrow in \eqref{eqn_compatibility_of_we} is the unit map $S \to S_E$, while the rightmost downward arrow is the localization map $S \to L(S)$.
\end{proof}

\begin{proof}[Proof of Theorem \ref{thm_main_lemma_2}]
To begin, note that $S_E$ is cofibrant by Proposition \ref{prop_first_properties_of_s_e}. We now show the equivalence \eqref{eqn_good_model_localization} claimed in Theorem \ref{thm_main_lemma_2}. As shown in Proposition \ref{prop_first_properties_of_s_e}, the weak equivalence $S_E \simeq L(S)$ induces an isomorphism on stable homotopy groups and so remains a weak equivalence after forgetting to $\mathsf{Sp}^\mathbb{N}$. Together Remark \ref{rem_subtlety} and the fact that both $S_E$ and $L(S)$ are fibrant, this shows that the we have isomorphisms
\begin{align}
\mathcal{U}S_E \ \iso \ \mathcal{RU}S_E \ \iso \ \mathcal{RU}L(S) \ \iso \ \mathcal{U}L(S) \ \iso \ L_E(S)
\end{align}
in the stable homotopy category. 
Using the comparisons of \cite[0.3]{Mandell_May_Schwede_Shipley}, the fact that $L_E$ is smashing, and the naturality shown in \eqref{eqn_compatibility_of_we}, we then obtain a commutative diagram
\begin{align}\label{eqn_proof_of_good_model}
\xymatrix{
\mathcal{RU}X \ar[d] \ar ^-{\eta\wedge\id}[r] & \mathcal{RU}S_E\wedge_{\mathcal{SHC}} \mathcal{RU}X \ar ^-{\iso}[dl]\\
L_E(\mathcal{RU}X)
}
\end{align}
in the stable homotopy category, where $\eta$ is induced by the unit map $S \to S_E$ and the vertical map is the natural localization map \cite{Bousfield_localization_spectra} of $\mathcal{RU}X$. This establishes \eqref{eqn_good_model_localization}.

Lastly, we show that the pairing map $S_E \wedge S_E \weto S_E$ is a weak equivalence. Indeed, by taking $X = S_E$ in \eqref{eqn_proof_of_good_model}, the ``two-out-of-three'' property implies that the unit map $\mathcal{RU}S_E \to \mathcal{RU}S_E \wedge_{\mathcal{SHC}} \mathcal{RU}S_E$ is an isomorphism in $\Ho(\mathsf{Sp}^\mathbb{N})$. It follows from \cite[0.3]{Mandell_May_Schwede_Shipley} that the corresponding map $S_E \weto S_E \wedge S_E$ is a weak equivalence in $\Spectra$, and the fact that there is a retract
\begin{align}
\xymatrix{
S_E \ar[r] \ar _-{\id}[dr] & S_E \wedge S_E \ar[d]\\
& S_E
}
\end{align} 
in $\Spectra$ from the commutative monoid structure of $S_E$ then completes the proof.
\end{proof}

\begin{proof}[Proof of Theorem \ref{thm_main_theorem}]
It follows from Theorem \ref{thm_main_lemma_1} that there is a weak equivalence
\begin{align}\label{eqn_proof_of_main_theorem_1}
\mathcal{RU}\big(S_E \wedge (\tau_n\capO\circ_\capO^\mathsf{h} (X))\big) \simeq \mathcal{RU}\big( \tau_n\capO\circ_\capO^\mathsf{h}(S_E\wedge X)\big)
\end{align}
in $\mathsf{Sp}^\mathbb{N}$. By \cite[0.3]{Mandell_May_Schwede_Shipley} and Theorem \ref{thm_main_lemma_2}, the left hand side of \eqref{eqn_proof_of_main_theorem_1} is weakly equivalent in $\mathsf{Sp}^\mathbb{N}$ to $L_E\big(\mathcal{RU}(\tau_n\capO\circ_\capO^\mathsf{h}(X))\big)$ which, since $\tau_n\capO\circ_\capO^\mathsf{h}(X)$ is fibrant, is weakly equivalent in $\mathsf{Sp}^\mathbb{N}$ to $L_E(\mathcal{U}\tau_n\capO\circ_\capO^\mathsf{h}(X))$. For the same fibrancy reason, the right hand side of \eqref{eqn_proof_of_main_theorem_1} is weakly equivalent in $\mathsf{Sp}^\mathbb{N}$ to $\mathcal{U}\tau_n\capO\circ_\capO^\mathsf{h}(S_E\wedge X)$. This establishes the first weak equivalence of Theorem \ref{thm_main_theorem}.

To see that $\mathcal{U}(S_E\wedge X) \simeq L_E(\mathcal{U}X)$, recall that both $S_E$ and $X$ are fibrant, and that the former is also cofibrant. By \cite[4.10]{Schwede_homotopy_groups}, $S_E \wedge X$ is semistable (see \cite[5.6.1]{Hovey_Shipley_Smith}). It follows that $\mathcal{U}(S_E \wedge X)$ is weakly equivalent in $\mathsf{Sp}^\mathbb{N}$ to $\mathcal{RU}(S_E \wedge X)$, the latter of which is weakly equivalent in $\mathsf{Sp}^\mathbb{N}$ to $L_E(\mathcal{RU}X)$. Since $X$ is fibrant, $L_E(\mathcal{RU}X) \simeq L_E(\mathcal{U}X)$ in $\mathsf{Sp}^\mathbb{N}$, and this completes the proof.
\end{proof}

\section{Identification of the Taylor tower}\label{sec_identification_taylor_tower}
The purpose of this section is to show the following, which identifies the homotopy completion tower with the Taylor tower of the identity functor on $\capO$-algebras and, along with Theorem \ref{thm_main_theorem}, proves Theorem \ref{thm_main_corollary}. Note that, in this section, we continue to assume $\capO$ is $\Sigma$-cofibrant.

\begin{thm}\label{thm_identification_of_taylor_tower}
For any cofibrant $\capO$-algebra $X$, there is a weak equivalence
\begin{align}
P_n(\id)(X) \simeq \tau_n\capO\circ_\capO(X)
\end{align}
in $\AlgO$.
\end{thm}

A proof of this result appears in \cite[2.21]{Kuhn_Pereira}, and is based on an argument given by Pereira in \cite[4.3]{Pereira_spectral_operad}. As \cite{Pereira_spectral_operad} has not yet been published, we have included an alternative proof for the sake of completeness. The authors would like to thank Nick Kuhn for outlining this different strategy \cite{Kuhn_private_communication}.

\begin{rem}
It is worth noting that, if $X$ is $0$-connected, Theorem \ref{thm_identification_of_taylor_tower} can be obtained as a consequence of the connectivity estimates used to prove \cite[1.12]{Harper_Hess}. We emphasize that Theorem \ref{thm_identification_of_taylor_tower} makes no connectivity assumptions.
\end{rem}

To keep this section appropriately brief, we assume the reader is familiar with standard constructions in Goodwillie calculus, both in the context of spaces and $\capO$-algebras (see, for instance, \cite{Goodwillie_calculus_1}, \cite{Goodwillie_calculus_2}, \cite{Goodwillie_calculus_3}, \cite{Pereira_general_context}, and \cite{Pereira_spectral_operad}). In particular, when working in $\capO$-algebras, one often implicitly (pre)composes with functorial (co)fibrant replacements to keep things homotopically meaningful.

The strategy of our proof is as follows. To begin, because $P_n$ behaves particularly well when applied to spectrum-valued functors, it is advantageous to reduce Theorem \ref{thm_identification_of_taylor_tower} to proving the result when we consider the functors $\id$ and $\tau_n\capO\circ_\capO(-)$ as landing in $\Spectra$, which is accomplished by Lemma \ref{lem_u_commutes_p_n}. Next, we consider the free-forgetful adjunction 
\begin{align}
\xymatrix{
\Spectra \ar@<0.5ex>^-{\capO\circ(-)}[r] & \AlgO\ar@<0.5ex>^-{U}[l]
}
\end{align}
and analyze the $n^{th}$ degree Taylor approximation to the free $\capO$-algebra functor in Lemma \ref{lem_calc_of_approx_of_free}. Lastly, to prove Theorem \ref{thm_identification_of_taylor_tower}, we use the fact \cite[1.8]{Harper_bar_constructions} that the identity functor on $\AlgO$ can be resolved as the homotopy colimit of iterates of the free $\capO$-algebra functor.

\begin{rem}
It is common to apply $\capO\circ(-)$ to an $\capO$-algebra, in which case one is implicitly precomposing with the forgetful functor $U$.
\end{rem}

\begin{lem}\label{lem_u_commutes_p_n}
Suppose $F$ and $G$ are homotopy functors defined on, and with values in, $\capO$-algebras. If a map $F \to G$ induces an equivalence $P_n(UF) \weto P_n(UG)$, then it also induces an equivalence $P_nF \weto P_nG$.
\end{lem}
\begin{proof}
Since the positive flat stable model structure on $\AlgO$ is induced by the forgetful functor $U$, we know that homotopy limits in $\AlgO$ are calculated in the underlying category $\Spectra$, and that $U$ reflects weak equivalences. It follows from \cite[3.27]{Harper_symmetric_spectra} and \cite[4.11]{Harper_Hess} that filtered homotopy colimits in $\AlgO$ are also calculated in the underlying category of spectra. In particular, this means that the objects of homotopy limits and filtered homotopy colimits in $\AlgO$ are calculated in $\Spectra$, and the $\capO$-algebra structure on that object is then induced by the naturality of the homotopy (co)limit construction. Hence, the forgetful functor $U$ commutes naturally with the construction of $P_n$, i.e., we have a natural equivalence $UP_n \simeq P_nU$.
\end{proof}

\begin{lem}\label{lem_calc_of_approx_of_free}
Consider the map of operads $\capO \to \tau_n\capO$. Applying $P_n$ to the induced map $U\capO \circ (-) \to U\tau_n\capO\circ(-)$ of endofunctors on $\Spectra$ yeilds an equivalence of functors $P_n\big(U\capO\circ(-)\big) \weto P_n\big(U\tau_n\capO\circ(-)\big)$.
\end{lem}
\begin{proof}
For any $X$ in $\Spectra$, note that the following diagram
\begin{align}
\xymatrix{
\coprod_{k=0}^n\capO[k]\wedge_{\Sigma_k}X^{\wedge k} \ar^-{(\ast)}[r] \ar_-{\id}[dr] &
\coprod_{k=0}^\infty\capO[k]\wedge_{\Sigma_k}X^{\wedge k} \ar^-{(\#)}[d]\\ 
& \coprod_{k=0}^n\capO[k]\wedge_{\Sigma_k}X^{\wedge k} 
}
\end{align}
in $\Spectra$ commutes, where $(*)$ is the canonical inclusion and $(\#)$ is induced by mapping $\capO[k]$ to a point for $k > n$, i.e., $(\#)$ is the map induced by $U\capO\circ(-) \to U\tau_n\capO\circ(-)$. By the ``two-out-of-three'' property, it will therefore suffice to show that applying $P_n$ to $(*)$ yields a weak equivalence. Towards that end, note that since $P_n$ naturally commutes with sequential homotopy colimits, we have weak equivalences of functors as below.

\begin{align}\label{eqn_pn_hocolim}
\begin{gathered}
P_n(\coprod_{k=0}^\infty\capO[k]\wedge_{\Sigma_k}(-)^{\wedge k}) \simeq P_n\big(\hocolim_m(\coprod_{k=0}^m\capO[k]\wedge_{\Sigma_k}(-)^k)\big)\\
\simeq \hocolim_mP_n(\coprod_{k=0}^m\capO[k]\wedge_{\Sigma_k}(-)^k)
\end{gathered}
\end{align}
(In fact, for spectrum-valued functors, $P_n$ commutes with all homotopy colimits; see \cite[1.7]{Goodwillie_calculus_3}.)

To analyze \eqref{eqn_pn_hocolim}, let us fix $m \geq n$. It follows from \cite[3.1]{Goodwillie_calculus_3} (or \cite[5.24]{Pereira_general_context}) that the functor 
\begin{align}
X \mapsto \capO[k]\wedge_{\Sigma_k} X^{\wedge k}
\end{align}
in $\Spectra$ is $k$-homogeneous. Together with the facts that (i) $P_n$ commutes with homotopy limits and (ii) finite coproducts and products agree in $\Spectra$, this shows that 
\begin{align}
P_n(\coprod_{k=0}^m\capO[k]\wedge_{\Sigma_k}(-)^k) \simeq P_n(\prod_{k=0}^m\capO[k]\wedge_{\Sigma_k}(-)^k) \simeq P_n(\coprod_{k=0}^n\capO[k]\wedge_{\Sigma_k}(-)^k)
\end{align}
Hence, all of the structure maps in the homotopy colimit in \eqref{eqn_pn_hocolim} are weak equivalences after the $m=n$ term. It follows that the map of functors 
\begin{align}
\coprod_{k=0}^n\capO[k]\wedge_{\Sigma_k}(-)^k \longrightarrow \coprod_{k=0}^\infty\capO[k]\wedge_{\Sigma_k}(-)^k
\end{align}
on $\Spectra$ induced by the canonical inclusion yields an equivalence after applying $P_n$, which suffices to complete the proof.
\end{proof}

\begin{proof}[Proof of \ref{thm_identification_of_taylor_tower}]
It is explicitly shown in \cite[2.21]{Kuhn_Pereira} that $\tau_n\capO\circ_\capO(-)$ is $n$-excisive. The desired identification $P_n(\id)\simeq\tau_n\capO\circ_\capO(-)$ will therefore follow by showing an equivalence
\begin{align}\label{eqn_want_to_show}
P_n(\id) \simeq P_n\big(\tau_n\capO\circ_\capO(-)\big)
\end{align}
of functors $\AlgO \to \AlgO$.

The strategy now is to resolve the identity functor by iterates of the free $\capO$-algebra functor $\capO\circ(-)$. Crucial to the following argument are three of the main results of \cite{Harper_bar_constructions}. Indeed, it follows from \cite[1.8]{Harper_bar_constructions} that we have a weak equivalence of functors 
\begin{align}
\id\simeq \hocolim\BAR(\capO,\capO,-) \colon \AlgO \to \AlgO
\end{align}
This equivalence holds after composing with the forgetful functor and applying $P_n$, i.e.,
\begin{align}
P_n(U)\simeq P_n\big( U\hocolim\BAR(\capO,\capO,-) \big)
\end{align}
We then have the following weak equivalences of functors, where the constructions in simplicial $\capO$-algebras are done levelwise.
\begin{equation}\label{eqn_big_zigzag}
\begin{split}
&P_n(U)\simeq P_n\big(U\hocolim\BAR(\capO,\capO,-) \big) \overset{(1)}{\simeq} P_n\big(\hocolim U\BAR(\capO,\capO,-) \big)\\
&\overset{(2)}{\simeq} \hocolim P_nU\BAR(\capO,\capO,-) \overset{(3)}{\simeq} \hocolim P_nU\BAR(\tau_n\capO,\capO,-)\\
& \overset{(4)}{\simeq} P_n\hocolim U \BAR(\tau_n\capO,\capO,-) \overset{(5)}{\simeq} P_n\big( U\hocolim\BAR(\tau_n\capO,\capO,-) \big) \\
&\overset{(6)}{\simeq} P_n\big(U\tau_n\capO\circ_\capO(-)\big)
\end{split}
\end{equation}
Equivalences $(1)$ and $(5)$ follow from the fact \cite[1.6]{Harper_bar_constructions} that homotopy colimits of simplicial diagrams in $\AlgO$ can be calculated in the underlying category of spectra, and hence naturally commute with the forgetful functor $U$.

Next, equivalences $(2)$ and $(4)$ follow from the fact \cite[1.7]{Goodwillie_calculus_3} that $P_n$ commutes with homotopy colimits of spectrum-valued functors, essentially because $\hocolim$'s of spectra preserve cartesian cubes. Equivalence $(6)$ follows from the identification
\begin{equation}
\hocolim\BAR(\tau_n\capO,\capO,-)\simeq \tau_n\capO\circ_\capO(-)
\end{equation}
given by \cite[1.10]{Harper_bar_constructions}. Lastly, equivalence $(3)$ is derived as follows. 

Consider the map $U\BAR(\capO,\capO,0) \to U\BAR(\tau_n\capO,\capO,-)$ induced by $\capO \to \tau_n\capO$. Applying $P_n$ at simplicial degree one, for instance, gives the top row of the following diagram.
\begin{align}
\xymatrix{
P_n(U\capO\circ\capO\circ-) \ar[d] \ar[r] &  P_n(U\tau_n\capO\circ\capO\circ-)\ar[d]\\
P_n\big(P_n(U\capO\circ-)\circ\capO\circ-\big) \ar[r] & P_n\big(P_n(U\tau_n\capO\circ-)\circ\capO\circ-\big)
}
\end{align}

Lemma \ref{lem_calc_of_approx_of_free} shows that the bottom row of the diagram above is a weak equivalence, while a straightforward generalization of \cite[3.1]{Arone_Ching} to $\capO$-algebras (as detailed in \cite[4.9]{Pereira_spectral_operad}) shows that the vertical arrows are weak equivalences. Hence, the top row is a weak equivalence. Performing a similar analysis at each simplicial degree gives equivalence (3).

To complete the proof, it is a straightforward check that the zigzag of \eqref{eqn_big_zigzag} is compatible with the natural map $\id \ \iso \ \capO\circ_\capO(-) \to \tau_n\capO\circ_\capO(-)$ induced by the map of operads $\capO \to \tau_n\capO$. Lemma \ref{lem_u_commutes_p_n} then gives the desired equivalence of \eqref{eqn_want_to_show}, completing the proof.
\end{proof}

\bibliographystyle{plain}
\bibliography{aE_n_localization_bibliography}

\end{document}